\newtheorem{theorem}{Theorem}
\newtheorem{proposition}{Proposition}
\theoremstyle{remark}
\newtheorem{rem}{Remark}
\newtheorem{example}{Example}
\theoremstyle{definition}
\newtheorem{definition}[theorem]{Definition}
\title{\bf On ill-posedness concepts, stable solvability and saturation}
\newcommand{\xad}{x_\alpha^\delta}
\newcommand{\xdag}{x^\dagger}
\newcommand{\yd}{y^\delta}
\author{{\sc Bernd Hofmann} \footnote{Faculty of Mathematics, Chemnitz University of Technology, 09107 Chemnitz, Germany.}
$\quad$ and $\quad$ {\sc Robert Plato}\footnote{Department of Mathematics, University of Siegen,
Walter-Flex-Str.~3, 57068 Siegen, Germany.}}
\newcommand{\refeq}[1]{(\ref{eq:#1})}
\newcommand{\re}{ {\rm \bf Re \, }}
\newcommand{\skp}[2]{\langle #1,  #2 \rangle}
\newcommand{\mypsi}{\varphi}
\newcommand{\uk}[1][k]{u_{#1}}
\newcommand{\wk}[1][k]{w^{(#1)}}
\newcommand{\wnk}[1][k]{w^{(#1)}_n}
\newcommand{\vk}[1][k]{v^{(#1)}}
\newcommand{\vnk}[1][k]{v^{(#1)}_n}
\newcommand{\qdist}{\textup{qdist}}
\newcommand{\DF}{\mathcal{D}(F)}
\newcommand{\infinitedimensional}{infinite-dimensional\xspace}
\newcommand{\nullspace}{nullspace\xspace}
\newcommand{\nullspaces}{\nullspace{}s\xspace}
\newenvironment{myenumerate}{%
\begin{list}{(\alph{enumcount})}
{\setcounter{enumcount}{1}\usecounter{enumcount}
\setlength{\topsep}{1mm}
\setlength{\itemsep}{0mm}
\setlength{\listparindent}{4mm}
\setlength{\parsep}{0mm}
\setlength{\labelwidth}{0mm}
\setlength{\labelsep}{3mm}
\setlength{\itemindent}{3mm}
\setlength{\leftmargin}{0mm}
}}{\end{list}}
\begin{document}

\maketitle
\newcounter{enumcount}
\renewcommand{\theenumcount}{(\alph{enumcount})}

\begin{abstract}
We consider different concepts of well-posedness and ill-posedness and their relations  for solving nonlinear and linear operator equations in Hilbert spaces.
First, the concepts of Hadamard and Nashed are recalled which are appropriate for linear operator equations. For nonlinear operator equations, stable respective unstable solvability is considered, and the properties of local well-posedness and ill-posedness are investigated.
Those two concepts consider stability in image space and solution space, respectively,
and both seem to be appropriate concepts for nonlinear operators which are not onto and/or not, locally or globally, injective.
Several example situations for nonlinear problems are considered, including the prominent autoconvolution problems and other quadratic equations in Hilbert spaces.

It turns out that for linear operator equations, well-posedness and ill-posedness are global properties valid for all possible solutions, respectively.
The special role of the nullspace is pointed out in this case.

Finally, non-injectivity also causes differences in the saturation behavior of Tikhonov and Lavrentiev regularization of linear ill-posed equations. This is examined at the end of this study.
\end{abstract}

\section{Introduction}
\label{sec:s1}
The aim of this study is to discuss different concepts of well-posedness and ill-posedness for operator equations
\begin{equation} \label{eq:nonlinopeq}
F(x)=y\,
\end{equation}
where $F: \mathcal{D}(F) \subseteq X \to Y$ is an operator mapping with domain of definition $\mathcal{D}(F)$ between (in general \infinitedimensional) Hilbert spaces $X$ and $Y$ with norms $\|\cdot\|$ and inner products $\langle \cdot,\cdot\rangle$.
For the special case that $F:=A \in \mathcal{L}(X,Y)$ is a bounded linear operator with $\mathcal{D}(F)=X$  we write the corresponding linear operator equation as
\begin{equation} \label{eq:linopeq}
Ax=y\,.
\end{equation}
In the sequel, we denote by $\xdag \in \mathcal{D}(F)$ solutions to (\ref{eq:nonlinopeq}) and (\ref{eq:linopeq}), respectively, where $y \in Y$ characterizes
the exact right-hand side. The goal for the treatment of such operator equations consists in their \emph{stable approximate solution}, where we assume that instead of $y$ only noisy data $\yd \in Y$ are available obeying the
deterministic noise model
\begin{equation} \label{eq:noise}
\|y-\yd\| \leq\delta
\end{equation}
with noise level $\delta>0$. In this context, Hadamard's classical concept plays a prominent role, which assumes that for \emph{well-posedness in the sense of Hadamard} all three conditions (cf.~\cite{Hadamard23})
\begin{itemize}
\item[(i)] For all $y \in Y$ there exists a solution $\xdag$ (existence condition).
\item[(ii)] The solutions are always uniquely determined (uniqueness condition).
\item[(iii)] The solutions depend stably on the data $y$, i.e.~small perturbations in the right-hand side lead to only small errors in the solution (stability condition).
\end{itemize}
Otherwise the corresponding operator equation is \emph{ill-posed in the sense of Hadamard}.

\bigskip

Hadamard's well-posedness concept in its entirety can only be of importance for linear equations (\ref{eq:linopeq}). In the nonlinear case, the range $F(\mathcal{D}(F))$ of $F$ will rarely coincide with $Y$ such that $(i)$
is suspicious. With respect to $(ii)$ we have to distinguish the case of injective $F$ with well-defined inverse $F^{-1}:F(\mathcal{D}(F)) \to \mathcal{D}(F)$, where $(ii)$ is fulfilled, and the case of a non-injective operator $F$, where $(ii)$ is violated. In the former case $F^{-1}(y)$ is single-valued for all $y \in F(\mathcal{D}(F))$, whereas in the latter case the symbol $F^{-1}(y)$ can be used for characterizing the set of preimages to $y$.
In the following, we also distinguish these two cases to define the stability condition $(iii)$ in a more precise manner.

For operator equations (\ref{eq:nonlinopeq}) with \emph{nonlinear}
operator $F$, the literature on
inverse problems uses concepts of well-posedness and ill-posedness
mostly in a rather rough manner,
because in contrast to the linear case (cf.~\cite{Nashed86} and
Definition~\ref{def:Nashed})
the closedness of the range $F(\mathcal{D}(F))$ of the forward
operator $F$ does not serve as an
appropriate criterion. In \cite[p.~53]{Scherzetal09},
the authors simply mention that
``it is common to consider inverse problems to be ill-posed in the
sense that the solution
(provided it exists) is unstable with respect to data perturbations.''
Now stable and unstable
behavior for nonlinear forward operators $F$ is not only a local
property and can change from
point to point, but one has also to distinguish local properties in
the image space (cf.~Definition~\ref{def:stability} below)
and in the solution space (cf.~Definition~\ref{def:posed} below). This
clear distinction is one of the aims of this
study.

Let us first consider the local stability behavior in the image space.
For forward operators $F$
injective on $\mathcal{D}(F)$, stability at some point $y \in
F(\mathcal{D}(F))$ means that the single-valued inverse operator
$F^{-1}: F(\mathcal{D}(F)) \subseteq Y \to \mathcal{D}(F) \subseteq X$
is continuous at $y \in F(\mathcal{D}(F))$,
which is characterized by the fact that for every sequence $\{y_n\}_{n=1}^\infty \subset
F(\mathcal{D}(F))$ with $\lim \limits_{n \to \infty}
\|y_n-y\|=0\,$ we have that
\begin{equation} \label{eq:injlimqdist}
  \lim \limits _{n \to \infty} \|F^{-1}(y_n)-F^{-1}(y)\| =0\,.
  \end{equation}
In case of a non-injective operator $F$, the inverse $F^{-1}$ is a
set-valued mapping and stability or instability are based on
continuity concepts
of set-valued mappings. In the literature, the Painlev\`{e}--Kuratowski set
convergence
and the Pompeiu--Hausdorff set convergence are preferred (cf., e.g.,
\cite[Section~3B]{DonRock09}
and for \infinitedimensional spaces \cite{AubFra90}). The symmetric
Hausdorff distance, however,
is problematic for modeling the approximate solution of inverse
problems if on the one hand all solutions from the
set $F^{-1}(y)$ are equally suitable and if one has on the other hand
no influence on the choice of the specific
approximate solution from the set $F^{-1}(y_n)$. Then the
non-symmetric quasi-distance ${\rm qdist}(\cdot,\cdot)$,
which was already exploited in
\cite[Section 1.6.3, p.~40]{IvaVasTan78}
and \cite{Hof86},
seems to be an appropriate stability measure
in the non-injective case. Therefore, we suggest the following definition.

\begin{definition} \label{def:stability}
We call the operator equation (\ref{eq:nonlinopeq}) \emph{stably
solvable} at the point $y \in F(\mathcal{D}(F))$ if we have for every
sequence $\{y_n\}_{n=1}^\infty \subset F(\mathcal{D}(F))$ with $\lim
\limits_{n \to \infty}
\|y_n-y\|=0\,$ that
\begin{equation} \label{eq:limqdist}
  \lim \limits _{n \to \infty} {\rm qdist}(F^{-1}(y_n),F^{-1}(y)) =0\,,
  \end{equation}
  where
\begin{equation} \label{eq:qdist}
  {\rm qdist}(U,V):= \sup \limits_{u \in U} \inf \limits _{v \in V} \|u-v\|
  \end{equation}
denotes the quasi-distance between the sets $U$ and $V$.
$ \quad \vartriangle$
\end{definition}

\begin{rem} \label{rem:rem1}
If $F$ is injective on $\mathcal{D}(F)$, then the limit condition
(\ref{eq:limqdist}) means that for every sequence
$\{y_n\}_{n=1}^\infty \subset F(\mathcal{D}(F))$ with $\lim_{n
\to \infty}
\|y_n-y\|=0\,$ we have that (\ref{eq:injlimqdist}) is satisfied.
We mention that the violation of stable solvability at $y \in
F(\mathcal{D}(F))$ in the sense of our Definition~\ref{def:stability}
is equivalent to local ill-posedness in this point
in the sense of \cite[Def.~2.2]{LuFle12}, with $ M = X$ and $S(y) =  F^{-1}(y) $ there.
\quad $ \vartriangle $
\end{rem}
\section{Stability and Nashed's ill-posedness concept for linear problems}
\label{sec:s2}

Now we turn our considerations to linear operator equations (\ref{eq:linopeq}) with a bounded linear operator $A \in \mathcal{L}(X,Y)$ mapping between the Hilbert spaces $X$ and $Y$. Hadamard's existence condition $(i)$ takes place if and only if $A$ is \emph{surjective}, i.e.~$\mathcal{R}(A)=Y$. The uniqueness condition $(ii)$ is valid iff $A$ is \emph{injective}, i.e.~$\mathcal{N}(A)=\{0\}$. In this case of a trivial nullspace $\mathcal{N}(A)$ the stability condition $(iii)$
of Hadamard holds iff the linear inverse operator $A^{-1}: \mathcal{R}(A) \to X$ is bounded. For bounded $A^{-1}$ by applying Definition~\ref{def:stability} with $F:=A$ and $\mathcal{D}(F)=X$ we have that (\ref{eq:linopeq})
is \emph{stably solvable everywhere} on $\mathcal{R}(A)$. Conversely the equation is \emph{stably solvable nowhere} if $A^{-1}$ is unbounded. For non-trivial \nullspaces $\mathcal{N}(A)\not=\{0\}$ instead of $A^{-1}$  the Moore--Penrose inverse $A^\dagger: \mathcal{R}(A)\oplus \mathcal{R}(A)^\perp \subseteq Y \to X$ has to be considered in order to make stability assertions. In this context, the conceptional suggestions of M.~Z.~Nashed in \cite{Nashed86} are helpful, and following this concept  we define well-posedness and ill-posedness of a linear operator equation (\ref{eq:linopeq}) based on the fact whether the range $\mathcal{R}(A)$ of $A$ is closed or not, which is equivalent to the question whether the Moore--Penrose inverse $A^\dagger$ is a bounded or an unbounded linear operator. Accordingly, we distinguish the ill-posedness types I and~II.

\begin{definition} \label{def:Nashed}
We call a linear operator equation (\ref{eq:linopeq}) with a bounded linear operator $A$, mapping between the Hilbert spaces $X$ and $Y$, \emph{well-posed in the sense of Nashed} if  the range $\mathcal{R}(A)$ of $A$ is a closed subset of $Y$, consequently \emph{ill-posed in the sense of Nashed} if the range is not closed, i.e.~$\mathcal{R}(A) \not= \overline{\mathcal{R}(A)}^{Y}$.
In the ill-posed case, the equation (\ref{eq:linopeq}) is called \emph{ill-posed of type I} if the range $\mathcal{R}(A)$ contains an \infinitedimensional closed subspace, and \emph{ill-posed of type~II} otherwise.
\quad $ \vartriangle $
\end{definition}
\begin{rem} \label{rem:rem2}
Ill-posedness in the sense of Nashed requires that
\begin{equation}\label{eq:infdim}
{\rm dim}\,\mathcal{R}(A)=\infty\,,
\end{equation}
and under (\ref{eq:infdim}) the equation (\ref{eq:linopeq}) ill-posed in the sense of Nashed of type~II if and only if $A$ is compact. Well-posedness, however, does not exclude the case of non-injective $A$ possessing non-trivial
\nullspaces $\mathcal{N}(A)$.
We note that an analog to Definition~\ref{def:Nashed} in Banach spaces, but only for injective $A$, has been discussed in the context of $\ell^1$-regularization in \cite{FHV15}.
\quad $ \vartriangle $
\end{rem}

\begin{proposition} \label{pro:pro1}
If the linear operator equation (\ref{eq:linopeq}) is well-posed in the sense of Nashed, then the equation is stably solvable everywhere on $\mathcal{R}(A)=\overline{\mathcal{R}(A)}^Y$, which means that
for every sequence $\{y_n\}_{n=1}^\infty \subset \mathcal{R}(A)$ with $\lim_{n \to \infty} \|y_n-y\|=0$ and $y \in \mathcal{R}(A)$ the limit condition (\ref{eq:limqdist}) with $F:=A$ and $\mathcal{D}(F):=X$ holds true. If (\ref{eq:linopeq}) is ill-posed in the sense of Nashed, the equation is stably solvable nowhere.
\end{proposition}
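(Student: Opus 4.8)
The plan is to reduce the set-valued quasi-distance in (\ref{eq:limqdist}) to an ordinary norm involving the Moore--Penrose inverse $A^\dagger$, and then to read off both assertions from the equivalence — already recalled in the excerpt — between closedness of $\mathcal{R}(A)$ and boundedness of $A^\dagger$.

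First I would fix $y \in \mathcal{R}(A)$ together with a sequence $\{y_n\}_{n=1}^\infty \subset \mathcal{R}(A)$ obeying $\|y_n-y\| \to 0$. Since $A$ is linear, for any $z \in \mathcal{R}(A)$ the preimage set is the affine subspace $A^{-1}(z)=A^\dagger z + \mathcal{N}(A)$, because $A^\dagger z$ is one particular solution of $Ax=z$ and $\mathcal{N}(A)$ accounts for the homogeneous part. Substituting this into the definition (\ref{eq:qdist}), an element $u \in A^{-1}(y_n)$ takes the form $u=A^\dagger y_n+k$ with $k \in \mathcal{N}(A)$, and minimizing over $v=A^\dagger y+k' \in A^{-1}(y)$ I would obtain
\[
\inf_{v \in A^{-1}(y)} \|u-v\| = {\rm dist}\bigl(A^\dagger(y_n-y),\, \mathcal{N}(A)\bigr),
\]
where I use linearity of $A^\dagger$ on its domain and the fact that $k-k'$ ranges over all of $\mathcal{N}(A)$. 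The crucial observation is that this value no longer depends on $u$, so the supremum in (\ref{eq:qdist}) is attained trivially. Finally, since $A^\dagger$ maps $\mathcal{R}(A)$ into $\mathcal{N}(A)^\perp$ and $\mathcal{N}(A)$ is a closed subspace, the distance of $A^\dagger(y_n-y)$ to $\mathcal{N}(A)$ equals its norm, giving the clean identity
\[
{\rm qdist}\bigl(A^{-1}(y_n),A^{-1}(y)\bigr) = \|A^\dagger(y_n-y)\|.
\]

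With this identity in hand both claims are immediate. In the well-posed case $\mathcal{R}(A)$ is closed, hence $A^\dagger$ is bounded, and $\|A^\dagger(y_n-y)\| \le \|A^\dagger\|\,\|y_n-y\| \to 0$ for every $y$ and every admissible sequence, so (\ref{eq:linopeq}) is stably solvable everywhere on $\mathcal{R}(A)$. In the ill-posed case $A^\dagger$ is unbounded; since $A^\dagger$ vanishes on $\mathcal{R}(A)^\perp$, the unboundedness already occurs on $\mathcal{R}(A)$, so I can pick $r_k \in \mathcal{R}(A)$ with $\|r_k\| \le 1$ and $\lambda_k := \|A^\dagger r_k\| \to \infty$. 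Rescaling to $u_k := r_k/\sqrt{\lambda_k}$ yields $u_k \in \mathcal{R}(A)$ with $\|u_k\| \to 0$ but $\|A^\dagger u_k\| = \sqrt{\lambda_k} \to \infty$. For an arbitrary $y \in \mathcal{R}(A)$ the sequence $y_n := y+u_n$ then lies in $\mathcal{R}(A)$ and converges to $y$, yet ${\rm qdist}(A^{-1}(y_n),A^{-1}(y)) = \|A^\dagger u_n\| \to \infty$, so stable solvability fails at every point.

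The main obstacle, and the step deserving the most care, is establishing the quasi-distance identity, specifically the two structural facts that make the supremum over $u$ collapse: that the difference $A^\dagger(y_n-y)$ lies in $\mathcal{N}(A)^\perp$, so that its distance to $\mathcal{N}(A)$ is simply its norm, and that this quantity is independent of the particular $u \in A^{-1}(y_n)$. One should also verify that $y_n-y$ remains in $\mathcal{R}(A)$, so that $A^\dagger(y_n-y)=A^\dagger y_n - A^\dagger y$ is legitimate; this is where linearity of $A$, whence $\mathcal{R}(A)$ is a subspace, is genuinely used. Once the identity is secured, the dichotomy between bounded and unbounded $A^\dagger$ does the rest with no further analytic input.
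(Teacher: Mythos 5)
Your proposal is correct and follows essentially the same route as the paper: both reduce the quasi-distance between the affine preimage sets to $\|A^\dagger y_n - A^\dagger y\|$ via the orthogonality of $A^\dagger(y_n-y)$ to $\mathcal{N}(A)$, and then invoke the equivalence of closed range with boundedness of $A^\dagger$. Your treatment of the ill-posed case is merely more explicit than the paper's (which only asserts the existence of a bad sequence, while you construct $y_n = y + r_n/\sqrt{\lambda_n}$ concretely), but this is the same argument.
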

\begin{proof}
For arbitrary $y_n,y \in \mathcal{R}(A)$ with $\lim \limits_{n \to \infty} \|y_n-y\|=0$ and
$$F^{-1}(y)=\{x \in X:\,x=A^\dagger y+x_0,\;x_0 \in \mathcal{N}(A)\}$$ as well as $$F^{-1}(y_n)=\{x \in X:\,x=A^\dagger y_n+\tilde x_0,\;\tilde x_0 \in \mathcal{N}(A)\} \quad (n \in \mathbb{N}), $$ we can argue for every
$x_n \in F^{-1}(y_n)$ as follows: Since $A^\dagger y_n - A^\dagger y$ is orthogonal to $\mathcal{N}(A)$, the equality
\begin{equation} \label{eq:inflin}
\min \limits_{x \in F^{-1}(y)} \|x_n-x\| = \|A^\dagger y_n - A^\dagger y\|
\end{equation}
is valid. In particular, for (\ref{eq:linopeq}) well-posed in the sense of Nashed we have $\|A^\dagger\|_{\mathcal{L}(Y,X)} < \infty$ and hence
$${\rm qdist}(F^{-1}(y_n),F^{-1}(y))=\min \limits_{x \in F^{-1}(y)} \|x_n-x\| \le \|A^\dagger\|_{\mathcal{L}(Y,X)} \|y_n-y\| \to 0 \quad \mbox{as} \quad n \to \infty \,.$$
Consequently, the equation is stably solvable everywhere on $\mathcal{R}(A)$. On the other hand, for (\ref{eq:linopeq}) ill-posed in the sense of Nashed we have that $A^\dagger$ is unbounded. Hence, there exist
sequences $\{y_n\}_{n=1}^\infty$ in the range of $A$ such that $\|A^\dagger y_n - A^\dagger y\| \not\to 0$ although $\|y_n-y\| \to 0$  as $n \to \infty$. In this case,  the equation is stably solvable nowhere
on $\mathcal{R}(A)$.
\end{proof}

\section{Local well-posedness and ill-posedness}
\label{sec:s3}
\subsection{Basic notations}
In the nonlinear case, there occurs in general a locally varying behavior of solutions, moreover often an overlap of instability of solutions with respect to small data perturbations and the existence of distinguished solution branches. As a consequence, O.~Scherzer and the first author have suggested in \cite[Def.~1.1]{HofSch98} (see also
\cite[Def.~1.5]{Hofm00} or \cite[Def.~3.15]{Schusterbuch12}) a local concept in the sense of the following Definition~\ref{def:posed}. Recently, A.~Kirsch and A.~Rieder have taken advantage of this concept in \cite{KiRi14,KiRi16} for analyzing
inverse problems in seismic tomography, electrodynamics and elasticity.

\begin{definition}  \label{def:posed}
The operator equation (\ref{eq:nonlinopeq}) is called \emph{locally well-posed} at the solution $\xdag \in \mathcal{D}(F)$ if there is a closed ball $\mathcal{B}_r(\xdag)$ with radius $r>0$ and center $\xdag$ such that for
every sequence $\{x_n\}_{n=1}^\infty \subset \mathcal{B}_r(\xdag)\cap \mathcal{D}(F)$ \linebreak
the convergence of images $\lim_{n \to \infty} \|F(x_n)-F(\xdag)\|=0$ implies the convergence of the preimages
$\lim_{n \to \infty} \|x_n-\xdag\|=0$. Otherwise equation (\ref{eq:nonlinopeq}) is called \emph{locally ill-posed} at $\xdag$.
\quad $ \vartriangle $
\end{definition}
\begin{rem} \label{rem:rem3}
Local well-posedness at $\xdag$ in the sense of Definition~\ref{def:posed} requires \emph{local injectivity}, which means that $\xdag$ is the only
solution in $\mathcal{B}_r(\xdag)\cap \mathcal{D}(F)$ and hence $\xdag$ is an isolated solution of the operator equation. This often provokes criticism of Definition~\ref{def:posed}, but the underlying idea of this definition is that a
really existing physical quantity $\xdag$ is the unique solution of (\ref{eq:nonlinopeq}) in the ball $\mathcal{B}_r(\xdag)\cap \mathcal{D}(F)$  and can be recovered exactly when
the measurement process may be taken arbitrarily precise, i.e.~when $\delta \to 0$ can be implemented. The idea does not exclude the case that further branches of solutions to (\ref{eq:nonlinopeq}) exist
in $\mathcal{D}(F)$ outside of the ball. For another instability concept mixing solvability and local ill-posedness we refer to \cite[p.~1]{LorWor13}.
\quad $ \vartriangle $\end{rem}

Directly from the Definitions~\ref{def:stability}  and \ref{def:posed} we have the following implication. Notice that the converse implication in Proposition \ref{pro:pro0} does not hold, in general.

\begin{proposition} \label{pro:pro0}
Let the operator $F$ be locally injective at $ \xdag \in \DF $. If the operator equation (\ref{eq:nonlinopeq}) is stably solvable at $y=F(\xdag) $, then this equation is locally well-posed at $\xdag$.
\end{proposition}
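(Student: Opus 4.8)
The plan is to establish local well-posedness directly from Definition~\ref{def:posed}, taking as the required ball one whose radius is chosen strictly below the radius furnished by local injectivity.

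First I would invoke local injectivity at $\xdag$ to fix a radius $r_0>0$ such that $\xdag$ is the \emph{only} solution of $F(x)=y$ lying in $\mathcal{B}_{r_0}(\xdag)\cap\mathcal{D}(F)$; equivalently, every solution $v\in F^{-1}(y)$ with $v\neq\xdag$ satisfies $\|v-\xdag\|>r_0$. I would then verify Definition~\ref{def:posed} using the smaller ball $\mathcal{B}_\rho(\xdag)$ with $\rho:=r_0/2$. Next, take an arbitrary sequence $\{x_n\}\subset\mathcal{B}_\rho(\xdag)\cap\mathcal{D}(F)$ with $\|F(x_n)-F(\xdag)\|\to0$ and put $y_n:=F(x_n)$, so that $y_n\in F(\mathcal{D}(F))$ and $y_n\to y$. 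Stable solvability at $y$ then yields ${\rm qdist}(F^{-1}(y_n),F^{-1}(y))\to0$. Since $x_n\in F^{-1}(y_n)$, the very definition of the quasi-distance as a supremum over the first set gives $\inf_{v\in F^{-1}(y)}\|x_n-v\|\le{\rm qdist}(F^{-1}(y_n),F^{-1}(y))\to0$, so one can select solutions $v_n\in F^{-1}(y)$ with $\|x_n-v_n\|\to0$.

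The only genuine obstacle is that $F^{-1}(y)$ may contain solutions other than $\xdag$: local injectivity is merely a \emph{local} statement, so the relation $\inf_v\|x_n-v\|\to0$ does not by itself force $x_n\to\xdag$, since a priori the $x_n$ could drift toward some distant solution. This is precisely where the choice $\rho<r_0$ is exploited, and I would close the argument by contradiction. Suppose $\|x_n-\xdag\|\not\to0$; passing to a subsequence we may assume $\|x_n-\xdag\|\ge\varepsilon$ for some $\varepsilon>0$, while for all large $n$ we have $\|x_n-v_n\|<\min\{\varepsilon/2,\,r_0-\rho\}$.

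Then $\|v_n-\xdag\|\ge\|x_n-\xdag\|-\|x_n-v_n\|>0$ forces $v_n\neq\xdag$, hence $\|v_n-\xdag\|>r_0$ by local injectivity; but simultaneously $\|v_n-\xdag\|\le\|v_n-x_n\|+\|x_n-\xdag\|<(r_0-\rho)+\rho=r_0$, a contradiction. Therefore $\|x_n-\xdag\|\to0$, which is exactly local well-posedness at $\xdag$. The essential mechanism is thus the interplay between the quasi-distance (which measures the deviation of every approximate preimage from the full solution set) and the shrunken radius (which ensures that the solution being approached can only be $\xdag$ itself).
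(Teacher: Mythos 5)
Your proof is correct and is exactly the direct verification from Definitions~\ref{def:stability} and \ref{def:posed} that the paper has in mind (the paper states the implication follows ``directly from the definitions'' and prints no proof). The only stylistic remark is that the final contradiction is avoidable: once $\|x_n-v_n\|<r_0-\rho$ you already have $v_n\in\mathcal{B}_{r_0}(\xdag)\cap\mathcal{D}(F)$, hence $v_n=\xdag$ by local injectivity and $\|x_n-\xdag\|=\|x_n-v_n\|\to 0$ immediately.
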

%

The nonlinear operator equation (\ref{eq:nonlinopeq}) can be stably solvable at some points $y=F(\xdag),\;\xdag \in \mathcal{D}(F)$, and not stably solvable at other points of the range.
Similarly, (\ref{eq:nonlinopeq}) can be locally well-posed at some points $\xdag \in \mathcal{D}(F)$ and locally ill-posed at other points of the domain of definition.
The following three examples illustrate such behavior.

\subsection{Three examples}
\begin{example}[one-dimensional example]
Let $ X =Y:= \mathbb R$ and consider the nonlinear mapping $F: \mathbb R \to \mathbb R$ defined as
$$F(x):=\frac{x^2}{1+x^4}\,. $$
Then the corresponding equation  (\ref{eq:nonlinopeq}) is locally well-posed for all $\xdag \in \mathbb R$. However, the equation is evidently not stably solvable at $y=F(0)=0$,
because we have, for $y_n>0$ tending to zero as $n \to \infty$,  $\lim_{n \to \infty}{\rm qdist}(F^{-1}(y_n),F^{-1}(0))=+\infty$. On the other hand, the equation is stably solvable for all other range points
$y>0$.
\quad $ \vartriangle $\end{example}
In \cite{FlemmingHabil17} and earlier in \cite{Flemming14}, J.~Flemming has comprehensively studied \emph{quadratic} operators $F: X \to Y$ between separable Hilbert spaces $X$ and $Y$, where the nonlinear operator $F(x):=B(x,x) \in Y$ defined for all $x \in X$ is based on a bounded bilinear mapping $B: X \times X \to Y$ with $\|B(u,v)\| \le c \,\|u\|\,\|v\|$ for all $u,v \in X$ and some constant $c>0$.
The class of nonlinear equations (\ref{eq:nonlinopeq}) with quadratic operators $F$ and $\mathcal{D}(F)=X$ is rather close to linear equations (\ref{eq:linopeq}).
In the sequel we examine
the properties local well-posedness and stable solvability by means of
two examples with quadratic examples.

We first present an example of a quadratic operator $ F $ such that the corresponding operator equation (\ref{eq:nonlinopeq}) is locally well-posed at some points, and local ill-posedness may occur at other points of the domain of definition.
%
\begin{example}[selfintegration-weighted identity operator]
\label{ex:quadratic_example_1}
Let $ X =Y:= L^2_{\mathbb R}(0,1)$ (Hilbert space of real-valued square integrable functions over the unit interval $(0,1)$), and let
the quadratic operator $ F : L^2_{\mathbb R}(0,1) \to L^2_{\mathbb R}(0,1) $ be given by
$$
[F(x)](s) := \mypsi(x)\, x(s),\;\;s \in (0,1),\;\; \ \textup{ for } \ x \in X, \ \textup{where }\;\; \mypsi(x) := \int_0^1 x(t) dt.
$$
This operator is injective on $ X\backslash N $, but at each point of $ N $ it
is not locally injective, where $ N = F^{-1}(0) $. In addition, the corresponding operator equation (\ref{eq:nonlinopeq}) is locally ill-posed everywhere on $ N $, locally well-posed everywhere on $ X\backslash N $, and stably solvable everywhere on $ F(X) $.
Details are given in Proposition \ref{prop:quadratic_example_1}
and its proof, which are presented in the appendix.
\quad $ \vartriangle $
\end{example}
There also exist quadratic operators $ F $ such that corresponding operator equation (\ref{eq:nonlinopeq}) is stably solvable at some points of the range, and
at other points it is not stably solvable as the following example shows.
\begin{example}[quadratic combination of two linear operators with closed range respective non-closed range]
\label{ex:quadratic_example_2}
Let $ X $ be an infinite-dimensional, separable real Hilbert space with orthonormal basis $ \{\uk\}_{k=1}^\infty $. Consider the damped shift operator
$ S: X \to X $ and the partial isometry operator $ T: X \to X $ given by
\begin{align*}
Sx := \skp{x}{\uk[1]} \uk[1] + \sum_{k=3}^\infty \sigma_k \skp{x}{\uk} \uk[k+1],
\qquad Tx := \sum_{k=2}^\infty \skp{x}{\uk} \uk, \qquad x \in X,
\end{align*}
where $ 0 \neq \sigma_k \in \mathbb{R} $ satisfies $ \sigma_{k} \to 0 $ as $ k \to \infty $.
Thus $ S $ and $ T $ are bounded linear operators, with $ S $ being compact.
%
Consider the following quadratic operator $ F: X \to X $:
\begin{align*}
F(x) := B(x,x), \quad \textup{where }
B(x, y) := \skp{x}{\uk[1]} Sy +  \skp{x}{\uk[2]} Ty, \quad x,y \in X.
\end{align*}
%
For the operator equation (\ref{eq:nonlinopeq}) with $X=Y$
corresponding to this operator $ F $, we have stable solvability at some points in the range of $ F $, as well as unstable solvability at other points in the range.
Details can be found in Proposition \ref{prop:quadratic_example_2} and its proof in the appendix.
\quad $ \vartriangle $
\end{example}
%
%
\subsection{Autoconvolution problems}
\label{sec:s5}
%
We will mention by means of two example situations some corresponding details concerning \emph{autoconvolution} operators $F(x):=x \star x$ which form an interesting subclass of quadratic operators, for which the
associated equations  (\ref{eq:nonlinopeq}) possess numerous applications in natural sciences and engineering.

\begin{example} \label{ex:complauto}
{\rm Let $X:=L^2_{\mathbb C}(0,1)$ and $Y:=L^2_{\mathbb C}(0,2)$
(Hilbert spaces of complex-valued square integrable functions over the intervals $(0,1)$ and $(0,2)$, respectively). Then the autoconvolution operator $F$ with $\mathcal{D}(F)=X$ generated from the bilinear mapping $B(u,v)=u \star v$ attains the form
\begin{equation} \label{eq:complauto}
[F(x)](s):=\begin{cases} \quad \int \limits _0^s \;\; x(s-t)\,x(t)\,dt,\qquad 0 \le s \le 1,\\ \quad \int \limits _{s-1}^1 \; x(s-t)\,x(t)\,dt,\qquad 1 < s \le 2.  \end{cases}
\end{equation}
As a consequence of Titchmarsh's convolution theorem , the corresponding operator equation (\ref{eq:nonlinopeq}) possesses for arbitrary $y =F(\xdag), \;\xdag \in X,$ the solution set $F^{-1}(y)=\{\xdag,-\xdag\}$. Moreover, this equation is locally ill-posed everywhere as Example~3.1 in \cite{GHBKS14} indicates. Evidently, the equation is also stably solvable nowhere if $\mathcal{D}(F)=X$, but stable solvability can be achieved by an appropriate restriction of the domain, for example to
$\mathcal{D}(F):=\{x \in H_\mathbb{C}^\alpha(0,1):\,\|x\|_{H_\mathbb{C}^\alpha}\le c\}$ with some $\alpha > 0$ and $0<c<\infty$ (cf.~\cite[Theorem~4.2]{GHBKS14} and \cite[Proposition~2.3]{FleiHof96}).\quad $ \vartriangle $}\end{example}
\begin{example} \label{ex:realauto}
{\rm Now we turn to  the space situation $X=Y:=L^2_{\mathbb R}(0,1)$.
Here, the autoconvolution operator $F$ with
$$\mathcal{D}(F):=\{x \in X:\,x(t) \ge 0\;\; \mbox{a.e.~on}\;\;t \in (0,1)\}$$ can be written as
\begin{equation} \label{eq:realauto}
[F(x)](s):= \int \limits _0^s \;\; x(s-t)\,x(t)\,dt,\qquad 0 \le s \le 1.
\end{equation}
In this case, for $y \in F(\mathcal{D}(F))$ the solution $\xdag$ is uniquely determined if and only if there is no $\varepsilon>0$ such that $y(s)=0,\;0 \le s \le \varepsilon$ (cf.~\cite[Theorem~1]{GorHof94}), but for all $\xdag \in \mathcal{D}(F)$ the operator equation is locally ill-posed at $\xdag$ (cf.~\cite[Example~2.1]{BuerHof15}).
\quad $ \vartriangle $}\end{example}

\subsection{The linear case}
\textcolor{black}{For linear operator equations (\ref{eq:linopeq}), local well-posedness at some points and local ill-posedness at other points may not occur
as the following proposition shows.
}

\begin{proposition} \label{pro:pro2}
The linear operator equation (\ref{eq:linopeq}) is locally well-posed (cf.~Definition~\ref{def:posed} with $F:=A$ and $\mathcal{D}(F):=X$) everywhere on $X$ if the equation is on the one hand well-posed in the sense of Nashed (i.e.,~$\mathcal{R}(A)=\overline{\mathcal{R}(A)}^Y$)\, and if on the other hand the \nullspace of $A$ is trivial (i.e.,~$\mathcal{N}(A)=\{0\})$.
Otherwise, (\ref{eq:linopeq}) is locally ill-posed everywhere on $X$.
 \end{proposition}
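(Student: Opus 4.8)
The plan is to prove the two directions of this characterization separately, relying on the interplay between Nashed well-posedness, boundedness of $A^\dagger$, and the linear structure of preimage sets. First I would observe that for linear $A$ with $\mathcal{D}(F)=X$, the condition $F(x_n)\to F(\xdag)$ means $A x_n \to A \xdag$, i.e.\ $A(x_n-\xdag)\to 0$, so local well-posedness at $\xdag$ is equivalent to the implication $A z_n \to 0 \Rightarrow z_n \to 0$ for sequences $z_n = x_n - \xdag$ with $\|z_n\|\le r$. Crucially, because $A$ is linear this is independent of the base point $\xdag$: the set of admissible difference sequences is the same translated ball around $0$. This immediately explains the dichotomy in the statement---local well-posedness either holds at every $\xdag$ or at none---and reduces the whole proposition to studying the behaviour of $A$ near the origin.

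For the \emph{positive} direction, I would assume $\mathcal{R}(A)$ is closed and $\mathcal{N}(A)=\{0\}$. Then $A$ is injective with closed range, so by the open mapping theorem (or equivalently boundedness of $A^{-1}$ on $\mathcal{R}(A)$, which is the characterization recalled before Definition~\ref{def:Nashed}) there is a constant $c>0$ with $\|Az\|\ge c\|z\|$ for all $z\in X$. Hence $A z_n \to 0$ forces $\|z_n\|\to 0$, giving local well-posedness at every $\xdag$ with any radius $r$, say $r=1$. This is essentially the same computation as in Proposition~\ref{pro:pro1}, now read through the lens of Definition~\ref{def:posed}.

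For the \emph{converse}, I would show that if either hypothesis fails, then local ill-posedness holds everywhere, by exhibiting for each $\xdag$ a sequence $\{x_n\}\subset\mathcal{B}_r(\xdag)$ with $\|F(x_n)-F(\xdag)\|\to 0$ but $\|x_n-\xdag\|\not\to 0$. If $\mathcal{N}(A)\neq\{0\}$, pick $0\neq w\in\mathcal{N}(A)$, normalize so $\|w\|=r$, and set $x_n := \xdag + w$ for all $n$; then $F(x_n)=F(\xdag)$ exactly while $\|x_n-\xdag\|=r\not\to 0$. If instead $\mathcal{N}(A)=\{0\}$ but $\mathcal{R}(A)$ is not closed, then $A^\dagger=A^{-1}$ on $\mathcal{R}(A)$ is unbounded, so there is a sequence $\{z_n\}\subset X$ with $\|z_n\|=r$ (after normalization) yet $\|Az_n\|\to 0$; setting $x_n:=\xdag+z_n$ keeps $x_n\in\mathcal{B}_r(\xdag)$, yields $\|F(x_n)-F(\xdag)\|=\|Az_n\|\to 0$, but $\|x_n-\xdag\|=r\not\to 0$.

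The main obstacle is the non-injective case: the normalization $\|z_n\|=r$ together with $\|Az_n\|\to 0$ must be arranged so that the instability genuinely witnesses violation of Definition~\ref{def:posed}. Concretely, unboundedness of $A^\dagger$ guarantees difference vectors of fixed norm whose images vanish in the limit, but one should confirm these vectors can be taken orthogonal to $\mathcal{N}(A)$ (i.e.\ in $\mathcal{N}(A)^\perp$, where $A$ is injective) so that the failure is attributed to non-closed range rather than to the nullspace---though in the split treatment above this is handled cleanly by separating the two failure modes. I expect the writeup to emphasize the translation-invariance observation at the start, since it is what upgrades a statement about one point into a statement valid everywhere on $X$.
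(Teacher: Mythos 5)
Your proposal is correct and follows essentially the same route as the paper's own proof: the reduction of local well-posedness at any $\xdag$ to the base-point-independent implication $\|A\Delta_n\|\to 0 \Rightarrow \|\Delta_n\|\to 0$ for $\|\Delta_n\|<r$, the immediate failure of local injectivity when $\mathcal{N}(A)\neq\{0\}$, and the equivalence of that implication with boundedness of $A^{-1}$ on $\mathcal{R}(A)$, i.e.\ closedness of the range. You simply spell out the normalization details that the paper leaves implicit.
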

\begin{proof}
Directly from the Definition~\ref{def:posed} we find that (\ref{eq:linopeq}) is locally ill-posed everywhere if $\mathcal{N}(A)\not=\{0\}$. In the case $\mathcal{N}(A)=\{0\}$ and for linear $A$, local well-posedness at $\xdag$ is valid if and only if the implication
$$\|A\,\Delta_n\| \to 0 \quad \Longrightarrow \quad \|\Delta_n\| \to 0 \quad \mbox{as} \quad n \to \infty$$
holds whenever $\|\Delta_n\| < r$, where we have in mind $\Delta_n:=x_n-\xdag$. This implication, however, is valid if and only if the inverse operator $A^{-1}: \mathcal{R}(A) \to X$ is bounded, which just characterizes the situation of a closed range $\mathcal{R}(A)=\overline{\mathcal{R}(A)}^Y$.
\end{proof}

Consequently, for linear operator equations well-posedness and ill-posedness, respectively, are \emph{global properties} valid for all possible solutions $\xdag \in X$.
However, by comparing Propositions~\ref{pro:pro1} and
\ref{pro:pro2} it becomes clear that the concepts of stable solvability (cf.~Definition~\ref{def:stability}) and of local well-posedness (cf.~Definition~\ref{def:posed}) differ with respect to the \nullspace property of $A$. In the following section, we will motivate that the consideration
of both concepts is justified with respect to the error analysis of different \emph{regularization methods} (cf., e.g.,~\cite{EHN96,IvaVasTan78,Scherzetal09,TikArs77}).

\section{Saturation of linear Tikhonov versus Lavrentiev regularization}
\label{sec:s4}

The most prominent method for the stable approximate solution of the linear operator equation (\ref{eq:linopeq}) is the classical \emph{Tikhonov regularization}, where for some \emph{regularization parameter} $\alpha>0$
Tikhonov-regularized solutions $\xad$ have the form
\begin{equation} \label{eq:Tik}
\xad=(A^*A+\alpha I)^{-1}A^*y^\delta\,.
\end{equation}
For $\delta \to 0$, convergence of $\xad$ to the uniquely determined \emph{minimum-norm solutions} $\xdag=A^\dagger y$ can be ensured if the parameter choice $\alpha=\alpha(\delta,y^\delta)$ is appropriate. One tool for the error analysis
ist the \emph{maximal best possible error}
\begin{equation} \label{eq:maxbest}
P^\delta(\xdag):=  \sup \limits_{y^\delta \in Y:\,\|A \xdag-y^\delta\| \le \delta} \,   \inf \limits_{\alpha>0} \|\xad-\xdag\|\,.
\end{equation}
Well-known convergence rates results yield for the classical Tikhonov regularization
$$ P^\delta(\xdag)=  \mathcal{O}(\delta^{2/3}) \quad \mbox{as} \quad \delta \to 0$$
under the \emph{source condition}
$$\xdag=A^*A v, \quad v \in X\,,$$
but also the following \emph{saturation result} is well known.

\begin{proposition} \label{pro:pro3}
If the linear operator equation (\ref{eq:linopeq}) is ill-posed in the sense of Nashed (i.e.,~$\mathcal{R}(A)\not=\overline{\mathcal{R}(A)}^Y$), then we have the implication
$$P^\delta(\xdag)=  o(\delta^{2/3}) \quad \mbox{as} \quad \delta \to 0 \quad \Longrightarrow \quad \xdag=0\,. $$
\end{proposition}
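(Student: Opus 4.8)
The plan is to establish the contrapositive: assuming $\xdag = A^\dagger y \neq 0$, I will exhibit a null sequence $\delta_m\downarrow 0$ and, for each $\delta_m$, one admissible data element $\yd$ forcing $\inf_{\alpha>0}\|\xad-\xdag\|\ge c\,\delta_m^{2/3}$ with a constant $c>0$ independent of $m$; since $P^{\delta}(\xdag)$ is a supremum over all admissible $\yd$, this yields $\limsup_{\delta\to 0}P^{\delta}(\xdag)/\delta^{2/3}\ge c>0$, contradicting the little-$o$ assumption. I work in the singular-system representation $\{\sigma_j,u_j,v_j\}$ with $Av_j=\sigma_j u_j$, $A^*u_j=\sigma_j v_j$ (the Nashed type~II case, i.e.\ $A$ compact; the general case is addressed at the end). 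Every admissible perturbation is $\yd=y+\delta w$ with $\|w\|\le 1$, and using $A^*y=A^*A\xdag$ together with $(A^*A+\alpha I)^{-1}A^*A-I=-\alpha(A^*A+\alpha I)^{-1}$ one obtains
\[
\xad-\xdag \;=\; b(\alpha)+\delta\,(A^*A+\alpha I)^{-1}A^*w,\qquad b(\alpha):=-\alpha(A^*A+\alpha I)^{-1}\xdag,
\]
splitting the error into the data-independent \emph{bias} $b(\alpha)$ and the noise-propagation term.

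The first step is a uniform lower bound on the bias. With $\xi_j:=\langle\xdag,v_j\rangle$ one has $\|b(\alpha)\|^2=\sum_j \alpha^2\xi_j^2/(\sigma_j^2+\alpha)^2$. Splitting according to whether $\alpha\le\|A\|^2$ or not and using $\sigma_j^2\le\|A\|^2$ gives $\alpha^2/(\sigma_j^2+\alpha)^2\ge \alpha^2/(4\|A\|^4)$ in the first case and $\ge 1/4$ in the second. Hence, for all small $\delta$ and every $\alpha\ge\delta^{2/3}$,
\[
\|b(\alpha)\|\;\ge\;\frac{\|\xdag\|}{2\|A\|^2}\,\delta^{2/3}.
\]
This is where $\xdag\neq 0$ enters, and the estimate is uniform in the location of the spectral mass of $\xdag$.

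The second step is the adversarial data. Because the equation is ill-posed in Nashed's sense, $\mathcal R(A)$ is not closed, so the singular values accumulate at zero; fix a subsequence $\sigma_m\downarrow 0$ and set $\delta_m:=\sigma_m^{3}$, so that $\delta_m^{2/3}=\sigma_m^{2}$ matches the scale of the mode $v_m$. Choose the one-mode perturbation $\yd:=y-\mathrm{sgn}(\xi_m)\,\delta_m\,u_m$ (with the $+$ sign if $\xi_m=0$). Then the noise term equals $-\mathrm{sgn}(\xi_m)\,\delta_m\sigma_m(\sigma_m^2+\alpha)^{-1}v_m$ and, crucially, is collinear with the $m$-th bias component with the \emph{same} sign for every $\alpha>0$; hence the $m$-th coordinate of $\xad-\xdag$ has modulus $(\alpha|\xi_m|+\delta_m\sigma_m)/(\sigma_m^2+\alpha)$, giving the key inequality
\[
\|\xad-\xdag\|\;\ge\;\max\Big(\,\|b(\alpha)\|,\ \frac{\delta_m\sigma_m}{\sigma_m^2+\alpha}\,\Big)\qquad\text{for all }\alpha>0 .
\]
Now distinguish two regimes: for $\alpha\ge\delta_m^{2/3}$ the first entry is $\ge \tfrac{\|\xdag\|}{2\|A\|^2}\delta_m^{2/3}$ by the first step, while for $\alpha\le\sigma_m^2=\delta_m^{2/3}$ the second entry is $\ge \delta_m\sigma_m/(2\sigma_m^2)=\tfrac12\delta_m^{2/3}$. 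Taking the infimum over $\alpha$ leaves $\inf_{\alpha}\|\xad-\xdag\|\ge c\,\delta_m^{2/3}$ with $c=\min\{\tfrac12,\|\xdag\|/(2\|A\|^2)\}$, settling the compact case.

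The step I expect to be the genuine obstacle is the interchange of the operations in $P^{\delta}=\sup_{\yd}\inf_{\alpha}$: the adversary must commit to a single $\yd$ \emph{before} the regularization parameter is chosen, so a perturbation placed at a fixed scale can be dodged by letting $\alpha\to 0$. The two devices that defeat this are the scale matching $\delta_m=\sigma_m^{3}$, which places the noise exactly at the spectral scale where the optimal $\alpha\sim\delta^{2/3}$ lives (available only because the non-closed range forces arbitrarily small $\sigma_m$), and the sign alignment, which makes bias and noise reinforce in the chosen coordinate and so rules out cancellation under the infimum over $\alpha$. For Nashed type~I operators, where $A$ is non-compact and $A^*A$ may have continuous spectrum accumulating at $0$, the same scheme goes through with singular vectors replaced by unit vectors $\phi_m$ localized in a thin spectral window $[\tfrac12\lambda_m,2\lambda_m]$ of $A^*A$ and $w=\pm A\phi_m/\|A\phi_m\|$; the one delicate point is that the cross term $\langle b(\alpha),\phi_m\rangle$ no longer has an $\alpha$-independent sign, and controlling it — using that $\alpha/(\lambda+\alpha)$ is nearly constant across the thin window — is the part requiring care.
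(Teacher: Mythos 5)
The paper offers no proof of Proposition~\ref{pro:pro3} at all: it delegates the compact (type~II) case to Groetsch's Theorem~3.2.4 and the type~I case to Neubauer's Theorem~3.1. Measured against that, your argument for the compact case is correct and complete, and it is in substance the classical Groetsch saturation proof: the decomposition $\xad-\xdag=-\alpha(A^*A+\alpha I)^{-1}\xdag+\delta(A^*A+\alpha I)^{-1}A^*w$, the uniform bias bound $\|b(\alpha)\|\ge\tfrac{\|\xdag\|}{2\|A\|^2}\,\delta^{2/3}$ for $\alpha\ge\delta^{2/3}$ (which legitimately uses the full norm of $\xdag$ because the minimum-norm solution lies in $\mathcal N(A)^\perp=\overline{\operatorname{span}}\{v_j\}$), and the sign-aligned one-mode perturbation at the matched scale $\delta_m=\sigma_m^3$. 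Your max-inequality is justified because the perturbation only increases the modulus of the $m$-th coordinate and leaves all others untouched, and the two regimes $\alpha\ge\sigma_m^2$ and $\alpha\le\sigma_m^2$ exhaust all $\alpha>0$; the constants check out, so $\inf_{\alpha>0}\|\xad-\xdag\|\ge c\,\delta_m^{2/3}$ along $\delta_m\downarrow0$, contradicting the little-$o$ hypothesis.

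The genuine gap is the type~I case, which the proposition asserts as well and which you only sketch. The difficulty you flag is real, and the remedy as written does not close it: over a window $[\tfrac12\lambda_m,2\lambda_m]$ the weight $\alpha/(\lambda+\alpha)$ is constant only up to a factor $4$, not $1+o(1)$, and since $\lambda\mapsto\langle E_\lambda\xdag,\phi_m\rangle$ generates a \emph{signed} measure, the cross term $\langle b(\alpha),\phi_m\rangle$ can genuinely change sign as $\alpha$ varies (take spectral mass of opposite signs at $\lambda_m/2$ and $2\lambda_m$), so no single choice of the sign of $w$ enforces reinforcement for all $\alpha$ simultaneously. A repair requires truly thin windows $[\mu_m(1-\varepsilon_m),\mu_m(1+\varepsilon_m)]$ around spectral points $\mu_m\downarrow0$ of $A^*A|_{\mathcal N(A)^\perp}$ (such windows carry nonzero spectral projection because every neighbourhood of a spectrum point does), together with control of the resulting $O(\varepsilon_m)$ residual and of the spectral mass of $\xdag$ inside the window relative to the noise contribution. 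That additional work is exactly the content of the Neubauer theorem the paper cites; as it stands, your proof establishes the proposition only for ill-posedness of type~II.
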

If (\ref{eq:linopeq}) is ill-posed in the sense of Nashed of type~II ($A$ compact), then this result was proven by Groetsch \cite[Theorem~3.2.4]{Groe84}, whereas Neubauer's result in \cite[Theorem~3.1]{Neubauer97} also includes the case of ill-posedness of type~I.

\begin{rem} \label{rem:rem4}
If (\ref{eq:linopeq}) is well-posed in the sense of Nashed (i.e.,~$\mathcal{R}(A)=\overline{\mathcal{R}(A)}^Y$ and hence $\|A^\dagger\|_{\mathcal L(Y,X)}<\infty$), we have
$$P^\delta(\xdag)=  \mathcal{O}(\delta) \quad \mbox{as} \quad \delta \to 0$$
for all minimum-norm solutions $\xdag=A^\dagger y$, regardless of whether or not the \nullspace $\mathcal{N}(A)$ is trivial. Namely, we have then for all $y^\delta$ obeying (\ref{eq:noise})
$$\inf \limits_{\alpha>0} \|\xad-\xdag\| \le \lim \limits_{\alpha \to 0} \|\xad-\xdag\|=\|A^\dagger y^\delta-A^\dagger y\| \le \|A^\dagger\|_{\mathcal L(Y,X)}\,\delta.
\quad \vartriangle
$$
\end{rem}

\medskip

If $X=Y$ and the linear operator $A \in \mathcal{L}(X,X)$ in the operator equation (\ref{eq:linopeq}) is \emph{accretive}, which means that $ \re \langle Ax,x \rangle \ge 0$ for all $x \in X$, then instead of the Tikhonov regularization the simpler \emph{Lavrentiev regularization} with regularized solutions
\begin{equation} \label{eq:Lav}
\xad=(A+\alpha I)^{-1}y^\delta
\end{equation}
can be used (cf.,~e.g.,~\cite{BoHo16,Plato95,Tautenhahn02}). Well-known convergence rates results yield here
$$ P^\delta(\xdag)=  \mathcal{O}(\delta^{1/2}) \quad \mbox{as} \quad \delta \to 0$$
under the \emph{source condition}
$$\xdag=A w, \quad w \in X\,.$$
The following proposition is a reformulation of a saturation result published recently by the second author, see \cite[Theorem~5.1]{Plato17}.
\begin{proposition} \label{pro:pro4}
If the linear operator $A \in \mathcal{L}(X,X)$ in equation (\ref{eq:linopeq}) is accretive (i.e.,~$\re \langle Ax,x\rangle  \ge 0\;\forall x \in X$), but not surjective (i.e.,~$\mathcal{R}(A) \not=X$), then
we have the implication
$$P^\delta(\xdag)=  o(\delta^{1/2}) \quad \mbox{as} \quad \delta \to 0 \quad \Longrightarrow \quad \xdag=0\,. $$
\end{proposition}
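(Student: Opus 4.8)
The plan is to prove the \emph{contrapositive}: assuming $\xdag\neq0$, I exhibit for each small $\delta$ a datum $\yd$ with $\|A\xdag-\yd\|\le\delta$ whose best Lavrentiev reconstruction $\xad=(A+\alpha I)^{-1}\yd$ has error at least $c\,\delta^{1/2}$ for a fixed $c>0$, which gives $P^\delta(\xdag)\ge c\,\delta^{1/2}$ and rules out the rate $o(\delta^{1/2})$. I may assume $\xdag\perp\mathcal N(A)$: a nontrivial nullspace component of $\xdag$ is not reconstructed by the iterates and makes $\inf_{\alpha>0}\|\xad-\xdag\|$ bounded below by a positive constant already for exact data (the decomposition $X=\mathcal N(A)\oplus\mathcal N(A)^\perp$ reduces $A$ since $\mathcal N(A)=\mathcal N(A^*)$), so then $P^\delta(\xdag)\not\to0$ and the implication holds trivially.

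I would start from the error splitting. With $h:=\yd-A\xdag$ and the identity $(A+\alpha I)^{-1}A-I=-\alpha(A+\alpha I)^{-1}$,
\[
\xad-\xdag=-\alpha(A+\alpha I)^{-1}\xdag+(A+\alpha I)^{-1}h .
\]
Accretivity supplies the two estimates that run the whole argument. From $\re\langle(A+\alpha I)v,v\rangle\ge\alpha\|v\|^2$ one gets $\|(A+\alpha I)^{-1}\|\le1/\alpha$, so the propagated-noise term is bounded by $\delta/\alpha$; and from $\|\xdag\|\le\|A+\alpha I\|\,\|(A+\alpha I)^{-1}\xdag\|\le(\|A\|+\alpha)\|(A+\alpha I)^{-1}\xdag\|$ one gets the lower bound
\[
\bigl\|\alpha(A+\alpha I)^{-1}\xdag\bigr\|\ge\frac{\alpha\,\|\xdag\|}{\|A\|+\alpha},
\]
so the Lavrentiev bias is at least \emph{linear} in $\alpha$ whenever $\xdag\neq0$. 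This linear floor on the bias is precisely the saturation obstruction.

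To produce the noise I use non-surjectivity. For accretive $A$ one has $\mathcal N(A)=\mathcal N(A^*)$ (expanding $\re\langle A(x+tz),x+tz\rangle\ge0$ in $t\in\mathbb R$ for $x\in\mathcal N(A)$ forces $A^*x=0$), whence $\overline{\mathcal R(A)}=\mathcal N(A)^\perp$, and non-surjectivity splits into two cases. If $\mathcal N(A)\neq\{0\}$, take a unit $u\in\mathcal N(A)=\mathcal N(A^*)$ and the datum $h=\delta u$. Then $(A+\alpha I)^{-1}u=u/\alpha$ exactly, and $\langle\alpha(A+\alpha I)^{-1}\xdag,u\rangle=\langle\xdag,\alpha(A^*+\alpha I)^{-1}u\rangle=\langle\xdag,u\rangle=0$, so the two terms of the splitting are orthogonal and
\[
\|\xad-\xdag\|^2=\bigl\|\alpha(A+\alpha I)^{-1}\xdag\bigr\|^2+\frac{\delta^2}{\alpha^2}\ge\frac{\alpha^2\|\xdag\|^2}{(\|A\|+\alpha)^2}+\frac{\delta^2}{\alpha^2}.
\]
Minimizing the right-hand side over $\alpha$ by AM--GM (the minimizer sitting at $\alpha\sim\delta^{1/2}$) gives $\inf_{\alpha>0}\|\xad-\xdag\|\ge c\,\delta^{1/2}$ with $c=(\|\xdag\|/\|A\|)^{1/2}$ for all small $\delta$, as required.

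The remaining case $\mathcal N(A)=\{0\}$ is where I expect the real work. Here $A$ is injective with dense range but, being non-surjective, has non-closed range and hence is not bounded below, so there are unit vectors $w_n$ with $\eta_n:=\|Aw_n\|\to0$; since $A$ is injective any weak limit point of $\{w_n\}$ lies in $\mathcal N(A)=\{0\}$, so $w_n\rightharpoonup0$ and $\langle\xdag,w_n\rangle\to0$. These $w_n$ replace $u$ only approximately: $(A+\alpha I)^{-1}w_n=\alpha^{-1}w_n-\alpha^{-1}(A+\alpha I)^{-1}Aw_n$ differs from $\alpha^{-1}w_n$ by a remainder of norm $\le\eta_n/\alpha^2$, and the cross term $\langle\alpha(A+\alpha I)^{-1}\xdag,w_n\rangle$ no longer vanishes identically. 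The hard part is to control both defects uniformly in $\alpha$: I would fix $n=n(\delta)$ so that $\eta_{n(\delta)}$ is negligible against the critical scale $\alpha\sim\delta^{1/2}$ (say $\eta_{n(\delta)}\le\delta$), whereupon near $\alpha\sim\delta^{1/2}$ the remainder and cross term are $o(\delta^{1/2})$ and the same competition between $\|\alpha(A+\alpha I)^{-1}\xdag\|\gtrsim\alpha$ and $\delta/\alpha$ reproduces the $\delta^{1/2}$ lower bound, while for $\alpha$ far from this scale the error is manifestly large. The delicate point is the small-$\alpha$ regime, where the remainder estimate $\eta_n/\alpha^2$ deteriorates; making this rigorous for a merely accretive (non-self-adjoint) $A$, where no spectral calculus is available, is the main obstacle.
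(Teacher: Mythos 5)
Your reduction to $\xdag\perp\mathcal N(A)$, your two accretivity estimates ($\|(A+\alpha I)^{-1}\|\le 1/\alpha$ and the linear lower bound on the bias), and your Case 1 ($\mathcal N(A)\neq\{0\}$, perturbation along a unit null vector, Pythagoras, AM--GM) are all correct and complete. But the argument as a whole has a genuine gap, and you have located it yourself: Case 2, where $A$ is injective with dense but non-closed range, is only a sketch, and this is not a peripheral case --- it is the generic one (e.g.\ any injective compact accretive $A$), so it cannot be waved through. The concrete failure point is the intermediate regime $\delta\lesssim\alpha\lesssim\delta^{1/2}$: there the propagated-noise term and the bias term are both of order $\delta^{1/2}$, and without spectral calculus you have no control on their relative orientation. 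Indeed, the cross term $-\delta\,\re\langle (A+\alpha I)^{-1}\xdag, w_n\rangle$ is only bounded by $\delta\,\|\xdag\|/\alpha\sim\|\xdag\|\,\delta^{1/2}$ at the critical scale --- exactly the size of the quantity you are trying to bound from below --- and the weak convergence $w_n\rightharpoonup 0$ does not help because $(A+\alpha I)^{-1}\xdag$ varies with $\alpha$, so no uniform-in-$\alpha$ smallness of the cross term follows. Cancellation between bias and noise propagation is therefore not excluded, and the claimed $\delta^{1/2}$ floor is not established.

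For context: the paper does not prove Proposition~\ref{pro:pro4} internally but cites \cite[Theorem~5.1]{Plato17}, and it explicitly remarks that, since no spectral calculus is available for a merely accretive $A$, the proof ``requires techniques which differ substantially'' from the spectral-family argument behind the Tikhonov saturation result of Proposition~\ref{pro:pro3}. Those substantially different techniques are precisely what is needed to close the step you flag as ``the main obstacle''; reproducing the self-adjoint strategy with approximate null vectors $w_n$ does not suffice. So the proposal is a correct and honest partial proof (the non-injective case plus the general setup), but the core of the theorem remains unproved.
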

Notice that symmetry of the operator $ A $ is not required in the preceding proposition, thus no spectral calculus of $ A $ is available in this setting, in general. As a consequence, the proof of Proposition \ref{pro:pro4} requires techniques which differ substantially from those used in the proof of Proposition \ref{pro:pro3}.

\begin{rem} \label{rem:rem5}
The conditions for the saturation result of Proposition~\ref{pro:pro4} concerning the Lavrentiev regularization coincide with the requirement that equation (\ref{eq:linopeq}) is locally ill-posed everywhere on $X$ (cf.~Proposition~\ref{pro:pro2}).
This is a consequence of the identity $$\overline{\mathcal{R}(A)}^X \oplus\, \mathcal{N}(A) =X\,,$$ which is valid for accretive $A$. Thus, saturation for Lavrentiev regularization deviates from that of Tikhonov regularization, where the saturation result of Proposition~\ref{pro:pro3} requires no \nullspace conditions. This deviation is based on the fact that the adjoint operator $A^*$ occurs in formula (\ref{eq:Tik}) but not in (\ref{eq:Lav}) and that
we have for general $A \in \mathcal{L}(X,Y)$ the identity $$\overline{\mathcal{R}(A)}^Y \oplus\, \mathcal{N}(A^*) =Y\,.
\quad \vartriangle
$$
\end{rem}

\section{Appendix: details on two quadratic examples}
In this appendix we give some details related with Examples \ref{ex:quadratic_example_1}
and \ref{ex:quadratic_example_2} presented in Section \ref{sec:s3}.
\begin{proposition}
\label{prop:quadratic_example_1}
The quadratic operator $ F $ from Example \ref{ex:quadratic_example_1}
is locally injective at each point of $ X\backslash N $,
where $ N = F^{-1}(0) $.
The corresponding operator equation (\ref{eq:nonlinopeq}) is
locally ill-posed everywhere on $ N $, locally well-posed everywhere on $ X\backslash N $, and stably solvable everywhere on $ F(X) $.
%
\end{proposition}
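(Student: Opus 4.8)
The plan is to exploit the simple algebraic structure of $F$. Writing $e \in X$ for the constant function $e \equiv 1$, we have $\|e\| = 1$ and $\int_0^1 x(t)\,dt = \langle x, e\rangle$, so that
\begin{equation*}
F(x) = \langle x, e\rangle\, x, \qquad N = F^{-1}(0) = \{x \in X : \langle x, e\rangle = 0\} = \{e\}^\perp .
\end{equation*}
The decisive observation is that pairing an image with $e$ recovers the scalar factor: $\langle F(x), e\rangle = \langle x, e\rangle^2$. This lets me describe every fibre. If $y = F(\bar x)$ with $\bar x \notin N$, then $\langle y, e\rangle = \langle \bar x, e\rangle^2 > 0$, any solution $x$ of $F(x) = y$ must be a scalar multiple of $\bar x$, and $\langle c\bar x, e\rangle\, c\bar x = \langle \bar x, e\rangle \bar x$ forces $c = \pm 1$; hence $F^{-1}(y) = \{\bar x, -\bar x\}$ is a two-point set, while $F^{-1}(0) = N$.

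First I would settle local injectivity and local well-posedness on $X \setminus N$ together. Fix $\bar x \notin N$ and set $a := \langle \bar x, e\rangle \neq 0$. Taking radius $r < |a|$, every $x \in \mathcal{B}_r(\bar x)$ satisfies $|\langle x, e\rangle - a| \le \|x - \bar x\| \le r < |a|$, so $\langle x, e\rangle$ is nonzero with the sign of $a$; since $-\bar x$ has $\langle -\bar x, e\rangle = -a$ of the opposite sign it lies outside the ball, leaving $\bar x$ as the only preimage of $F(\bar x)$ there (local injectivity). For local well-posedness, take $x_n \in \mathcal{B}_r(\bar x)$ with $F(x_n) \to F(\bar x)$ and put $a_n := \langle x_n, e\rangle$; the identity $\langle F(x_n), e\rangle = a_n^2$ gives $a_n^2 \to a^2$, and as all $a_n$ share the sign of $a$ this forces $a_n \to a$. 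Then $x_n = a_n^{-1} F(x_n) \to a^{-1} F(\bar x) = \bar x$, as required.

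For local ill-posedness on $N$ I would argue by non-isolation: $F$ vanishes identically on the subspace $N$, so for $\bar x \in N$ and any $r > 0$ I pick a fixed $z \in N$ with $0 < \|z\| < r$ (available since $\dim N = \infty$) and take the constant sequence $x_n := \bar x + z \in N \cap \mathcal{B}_r(\bar x)$; then $F(x_n) = 0 = F(\bar x)$ while $x_n \not\to \bar x$, so no admissible ball exists.

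The substantive part is stable solvability on $F(X)$, and the only delicate point is the behaviour at $y = 0$. Let $y_n \in F(X)$ with $y_n \to y$. If $y \neq 0$, then $\langle y, e\rangle > 0$ and, for large $n$, $\langle y_n, e\rangle > 0$, so $F^{-1}(y_n) = \{\pm y_n/\sqrt{\langle y_n, e\rangle}\}$; the branch map $y \mapsto y/\sqrt{\langle y, e\rangle}$ is continuous where $\langle y, e\rangle > 0$, whence both points of $F^{-1}(y_n)$ converge to the matching points of $F^{-1}(y)$ and the quasi-distance tends to $0$. The case $y = 0$ is where the asymmetry of $\mathrm{qdist}$ becomes essential: although $F^{-1}(0) = N$ is unbounded, the supremum in (\ref{eq:qdist}) runs only over the two-point set $F^{-1}(y_n) = \{\pm u_n\}$ with $u_n := y_n/\sqrt{\langle y_n, e\rangle}$ for $y_n \neq 0$ (the case $y_n = 0$ giving quasi-distance $0$), and since $\mathrm{dist}(\pm u_n, N) = |\langle u_n, e\rangle| = \sqrt{\langle y_n, e\rangle}$ we obtain $\mathrm{qdist}(F^{-1}(y_n), N) = \sqrt{\langle y_n, e\rangle} \le \sqrt{\|y_n\|} \to 0$. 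I expect this last computation to be the main obstacle: with the symmetric Pompeiu--Hausdorff distance the value would be $+\infty$, so stable solvability genuinely rests on measuring only how far the preimages of $y_n$ lie from the whole solution set $N$, and not conversely.
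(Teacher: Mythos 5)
Your proposal is correct and follows essentially the same route as the paper's proof: the same fibre description $F^{-1}(y)=\{\pm y/\sqrt{\varphi(y)}\}$ for $\varphi(y)>0$ and $F^{-1}(0)=N$, and the same key computation that the preimages of $y_n\to 0$ lie at distance exactly $\sqrt{\varphi(y_n)}$ from $N$ (your orthogonal projection $u_n\mapsto u_n-\langle u_n,e\rangle e$ is precisely the paper's $z_n^\pm = x_n^\pm - \varphi(x_n^\pm)\,1$). The only cosmetic difference is that you verify local well-posedness on $X\setminus N$ directly via $x_n=a_n^{-1}F(x_n)$, whereas the paper deduces it from local injectivity and stable solvability through Proposition~\ref{pro:pro0}; both arguments are sound.
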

\begin{proof}
We first state some elementary properties of the operator $ F $:
\begin{itemize}
\item
 The range $ F(X) $ of $ F $ is given by
$ F(X) = \{y \in X:\, y \equiv 0 \,\textup{ or } \,\mypsi(y) > 0 \} $.
\item
We have $ F^{-1}(0) = N := \mathcal{N}(\mypsi) $ (the \nullspace of $\mypsi $).

\item
 For each $ y \in X $ with $ \mypsi(y) > 0 $ we have
$ F^{-1}(y) = \big\{ x^\pm := \pm \frac{y}{\sqrt{\mypsi(y)}} \big\} $.
\end{itemize}
Here, $ \varphi $ means integration, cf. Example \ref{ex:quadratic_example_1}.
\begin{myenumerate}
\item
\label{enum:quadratic_example_1a}
Local injectivity of the operator equation $ F(x) = y $ everywhere on
$ X\backslash N $ immediately follows from the given representation of $ F^{-1}(y) $.

\item
\label{enum:quadratic_example_1b}
We next show that the operator equation (\ref{eq:nonlinopeq}) with such $F$ is stably solvable everywhere on $ F(X) $. For this purpose let $ y, \, y_n \in F(X) $
with $ \lim_{n\to \infty} \Vert y_n - y \Vert = 0 $. If $ y \neq 0 $, then
$ \lim_{n\to \infty} \mypsi(y_n) = \mypsi(y) > 0 $,
and thus $ \mypsi(y_n) > 0 $ for $ n $ large enough. For
$ x_n^\pm := \pm \mypsi(y_n)^{-1/2} y_n$ with sufficiently large $n$, we then have
\begin{align*}
\textup{qdist}(F^{-1}(y_n),F^{-1}(y))
\le \max\{ \Vert x_n^+ - x^+ \Vert, \Vert x_n^- - x^- \Vert \} \to 0
\;\; \textup{ as }\;\; n \to \infty.
\end{align*}
We next consider the case $ y \equiv 0 $. If $ y_n = 0 $, then we have
$ \textup{qdist}(F^{-1}(y_n),F^{-1}(y)) = 0 $. Without loss of generality we thus may assume
$ \mypsi(y_n) > 0 $ for each $ n $.
For $ z_n^\pm :=
x_n^\pm - \mypsi(x_n^\pm) 1 $, with $ x_n^\pm $ as above, we have $ \mypsi(z_n^\pm) = 0 $ and thus
$ z_n^\pm \in F^{-1}(0) = N $, and
\begin{align*}
\textup{qdist}(F^{-1}(y_n),F^{-1}(0))
\le \max\{ \Vert x_n^+ - z_n^+ \Vert, \Vert x_n^- - z_n^- \Vert \}
= \sqrt{\mypsi(y_n)}
 \to 0
\ \textup{ as } n \to \infty.
\end{align*}
Thus the corresponding operator equation (\ref{eq:nonlinopeq}) is indeed stably solvable everywhere on $F(X)$.

\item
\label{enum:quadratic_example_1c}
Since $ F^{-1}(0) = N $ is a nontrivial linear subspace of $ X $, it is immediate that
at each point of $ N $, the operator $ F $ fails to be locally injective, and thus the corresponding operator equation (\ref{eq:nonlinopeq}) is
locally ill-posed everywhere on $ N $.

\item
\label{enum:quadratic_example_1d}
It now follows easily from items \ref{enum:quadratic_example_1a},
\ref{enum:quadratic_example_1b} and Proposition \ref{pro:pro0}
that the equation $ F(x) = y $ is locally well-posed on $ X\backslash N $.
%
%
%
\end{myenumerate}
\end{proof}
\begin{proposition}
\label{prop:quadratic_example_2}
The quadratic operator $ F $ from Example \ref{ex:quadratic_example_2}
is locally injective at $ x = \uk[1] $
and $ x = \uk[2] $, respectively. The corresponding operator equation (\ref{eq:nonlinopeq}) is locally ill-posed at $ x = \uk[1] $, and not stably solvable at $ y = F(\uk[1]) = \uk[1] $.
It is locally well-posed at $ x = \uk[2] $, and stably solvable at $ y = F(\uk[2]) = \uk[2] $.
\end{proposition}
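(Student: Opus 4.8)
The plan is to work in coordinates relative to the orthonormal basis. Writing $x_k := \langle x, u_k \rangle$, a direct computation gives $F(x) = x_1 Sx + x_2 Tx$ with
$$[F(x)]_1 = x_1^2,\quad [F(x)]_2 = x_2^2,\quad [F(x)]_3 = x_2 x_3,\quad [F(x)]_k = x_1 \sigma_{k-1} x_{k-1} + x_2 x_k \ \ (k \ge 4).$$
From these formulas I first read off $F(u_1) = u_1$ and $F(u_2) = u_2$, and then solve $F(x) = u_1$ and $F(x) = u_2$ explicitly. In the first case $x_2^2 = 0$ and $x_1^2 = 1$ force $x_2 = 0$, $x_1 = \pm 1$, and then the relations for $k \ge 4$ together with $\sigma_{k-1} \neq 0$ force $x_k = 0$ for all $k \ge 3$; in the second case $x_1^2 = 0$, $x_2^2 = 1$ force $x_1 = 0$, $x_2 = \pm 1$, and again the remaining relations force $x_k = 0$ for $k \ge 3$. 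Hence $F^{-1}(u_1) = \{\pm u_1\}$ and $F^{-1}(u_2) = \{\pm u_2\}$, and in particular $F$ is locally injective at both points, since each is the unique solution in a sufficiently small ball.

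For the point $u_1$ I would exploit the decay $\sigma_k \to 0$ to manufacture instability. Fix $c \neq 0$ and indices $k_n \to \infty$ with $k_n \ge 3$, and set $x_n := u_1 + c\, u_{k_n}$, whose coordinates satisfy $x_1 = 1$, $x_2 = 0$. The formulas then collapse to $F(x_n) = u_1 + c\,\sigma_{k_n} u_{k_n+1}$, so that $\|F(x_n) - F(u_1)\| = |c|\,|\sigma_{k_n}| \to 0$ while $\|x_n - u_1\| = |c| \not\to 0$; choosing $|c| \le r$ this is exactly a violation of the local well-posedness condition at $u_1$. For the same reason the images $y_n := F(x_n) \to u_1$ satisfy $x_n \in F^{-1}(y_n)$ with ${\rm dist}(x_n, \{\pm u_1\}) = |c|$ for small $c$, whence ${\rm qdist}(F^{-1}(y_n), F^{-1}(u_1)) \ge |c| \not\to 0$, i.e.\ the equation is not stably solvable at $u_1$.

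For the point $u_2$ the core task is stable solvability at $y = u_2$; local well-posedness at $u_2$ then follows from local injectivity together with Proposition~\ref{pro:pro0}. Let $y_n \in F(X)$ with $y_n \to u_2$, and let $x \in F^{-1}(y_n)$ be arbitrary. The first two coordinate relations give $x_1^2 = y_{n,1} \to 0$ and $x_2^2 = y_{n,2} \to 1$, so $|x_1| \to 0$ and $|x_2| \to 1$, and crucially these magnitudes depend only on $y_n$, not on the choice of preimage. The remaining relations state that the tail $\xi := (x_3, x_4, \dots)$ solves $(x_2 I + x_1 \Sigma)\xi = \eta$, where $\eta := (y_{n,3}, y_{n,4}, \dots)$ and $\Sigma$ is the weighted shift $(\Sigma \xi)_k = \sigma_{k-1}\xi_{k-1}$, whose norm is bounded by $M := \sup_k |\sigma_k| < \infty$. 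Since $|x_1| M / |x_2| \to 0$ uniformly over preimages, a Neumann-series estimate yields $\|\xi\| \le \|\eta\|/(|x_2| - |x_1| M)$, and $\|\eta\| \le \|y_n - u_2\| \to 0$. Combining, $\|x - {\rm sign}(x_2)\, u_2\|^2 = x_1^2 + (|x_2| - 1)^2 + \|\xi\|^2 \to 0$ uniformly in $x \in F^{-1}(y_n)$, which gives ${\rm qdist}(F^{-1}(y_n), F^{-1}(u_2)) \to 0$.

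The main obstacle is precisely this uniformity in the last step: $F^{-1}(y_n)$ may contain several preimages, and stable solvability requires the supremum over all of them to vanish. The resolution is the observation that $|x_1|$ and $|x_2|$ are pinned down by the data $y_n$, so the invertibility of $x_2 I + x_1 \Sigma$ and the resulting tail bound hold with constants independent of the particular preimage; only the sign of $x_2$ is free, and this is harmless since both $u_2$ and $-u_2$ belong to $F^{-1}(u_2)$, so either choice is an admissible nearest target.
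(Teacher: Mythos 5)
Your proof is correct, and it follows the same overall architecture as the paper's: split $X$ into $\mathrm{span}\{u_1,u_2\}$ and the tail space $N=\overline{\mathrm{span}}\{u_k: k\ge 3\}$, compute the (at most four) preimages explicitly, perturb along $u_{k_n}$ with $\sigma_{k_n}\to 0$ to break local well-posedness at $u_1$, and reduce stable solvability at $u_2$ to a uniform bound on the tail component of all preimages of $y_n$. The one place where you genuinely deviate is the delicate step: the paper establishes that $aS_N+bI_N$ is an isomorphism of $N$ for $b\neq 0$ via spectral theory of compact operators (the weighted shift $S_N$ has empty point spectrum) and then asserts a lower bound $\Vert \pm a_nSw+b_nw\Vert \ge c\Vert w\Vert$ uniformly in $n$ ``from the stability considerations''; you instead use a Neumann-series estimate $\Vert \xi\Vert \le \Vert\eta\Vert/(|x_2|-|x_1|M)$ with $M=\sup_k|\sigma_k|$. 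Your bound is weaker in scope (it needs $|x_1|M<|x_2|$, whereas the spectral argument gives invertibility for every $b\neq 0$), but it is entirely sufficient here since $|x_1|\to 0$ and $|x_2|\to 1$, and it has the advantage of making the uniformity of the constant over $n$ and over the sign choices completely explicit, which is precisely the point the paper treats most tersely. You also correctly isolate the key structural fact that $|x_1|$ and $|x_2|$ are pinned down by the data (only their signs are free), which is what makes the supremum in the quasi-distance controllable. A further minor difference: you verify non-stable-solvability at $y=u_1$ directly by exhibiting $y_n=F(u_1+c\,u_{k_n})$ with ${\rm qdist}(F^{-1}(y_n),F^{-1}(u_1))\ge |c|$, whereas the paper deduces it from local ill-posedness plus local injectivity via Proposition~\ref{pro:pro0}; both are valid, and your direct route does not even need that proposition for this part.
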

\begin{proof}
In what follows we make use of the notations introduced in Example \ref{ex:quadratic_example_2}.
\begin{myenumerate}
\item
\label{it:quadratic_example_2-01}
Let
\begin{align*}
M = \textup{span}\{\uk[1], \uk[2]\},
\qquad
N = \overline{\textup{span}\{\uk \mid k = 3,4, \ldots\}}^X,
\end{align*}
which means that $ M^\perp = N $. First we state some elementary mapping properties
of the considered bilinear mapping and the related quadratic operator:
\begin{align}
& B(\uk[1], \cdot) = S, \quad F(\uk[1]) = \uk[1],
\qquad B(\uk[2], \cdot) = T, \quad F(\uk[2]) = \uk[2],
\nonumber
\\
& B(w, \cdot) = 0, \quad F(w) = 0, \quad w \in N.
\label{eq:quadratic_example_2-01}
\end{align}
In addition we have
$ B(\uk[1], \uk[2]) = B(\uk[2], \uk[1]) = 0, $  and thus
\begin{align}
F(a \uk[1] + b \uk[2]) = a^2 \uk[1] + b^2 \uk[2], \quad a, b \in \mathbb{R}.
\label{eq:quadratic_example_2_M}
\end{align}
This in particular means that $ M $ is an invariant subspace with respect to $ F $, i.e., $ F(M) \subset M $.
In addition we have
\begin{align}
F(v + w) = F(v) +B(v,w), \quad v \in M, \, w \in N,
\label{eq:quadratic_example_2-02}
\end{align}
which easily follows from
\refeq{quadratic_example_2-01}.
Notice that the right-hand side of \refeq{quadratic_example_2-02} provides an orthogonal decomposition of $ F(v + w) $, with $ F(v) \in M,\, B(v,w) \in N $.

\item
\label{it:fredholm}
As a preparation, we now consider for fixed
\begin{align*}
v = a \uk[1] + b \uk[2],  \quad a, b \in \mathbb{R}, \, \vert a \vert + \vert b \vert \neq 0,
\end{align*}
the restricted linear operator
\begin{align}
& B(v,\cdot)_{\vert N} = a S_{\vert N} + b T_{\vert N}
= a S_N + b I_N: N \to N,
\label{eq:fredholm}
\end{align}
where $ S_N := S_{\vert N}, \, I_N := I_{\vert N} $.

For each $ b \neq 0 $, the linear operator
in \refeq{fredholm}
%
is an isomorphism. This follows from standard spectral theory for compact operators.
Notice that $ S_N : N \to N $ is a compact operator which has no eigenvalues.
%
For future reference we note that the identity $ \mathcal{N}(a S_N + b I_N)~=~\{0\} $ is also valid for $ a \neq 0, b = 0 $.

\item
\label{it:quadratic_example_2-injective}
For each $ 0 \neq y \in F(X) $ there exist uniquely determined two real numbers $ a, b \ge 0,\, a + b > 0 $
and elements $ \wk \in N $ for $ k =  1,\ldots, 4 $
such that
\begin{align}
F^{-1}(y) = \{ \vk + \wk ,\;k = 1,\ldots, 4 \},
\label{eq:quadratic_example_2-injective}
\end{align}
where $ \vk[1/2]= a \uk[1] \pm b \uk[2] \in M $ and $ \vk[3/4] = -a \uk[1] \pm b \uk[2] \in M $.
This follows easily from the representations
\refeq{quadratic_example_2_M} and \refeq{quadratic_example_2-02}
and the injectivity properties given in item \ref{it:fredholm}.
The elements $ \wk[1],\ldots, \wk[4] $ are given by the
affine equations
$ F(\vk) + B(\vk,\wk) = y $
for $ k = 1,\ldots,4 $.

Notice that the set \refeq{quadratic_example_2-injective} contains only two elements if
either $ a = 0 $ or $ b = 0 $ holds. Notice further
that \refeq{quadratic_example_2-injective} implies local injectivity at each element of
$ F^{-1}(y) $, in particular at the points $ x = \uk[1] \in F^{-1}(\uk[1]) $
and $ x = \uk[2] \in F^{-1}(\uk[2]) $ which in fact is the first statement of the proposition.

\item
\label{it:quadratic_example_2-illp}
We next show that the equation $ F(x) = y $ is locally ill-posed at $ x = \uk[1] $.
For this purpose consider, for any $ r > 0 $ fixed, the sequence
$ x_n = x + r \uk[n] \in \mathcal{B}_r(x),\, n = 1,2,\ldots\ $. We
obviously have $ \Vert x_n - x \Vert = r $ for each $ n $, and
\begin{align*}
F(x_n) &= F(x) + B (x, r \uk[n]) = x + r \sigma_n \uk[n+1] \to x = F(x)
\quad (n \to \infty).
\end{align*}
Thus the equation $ F(x) = y $ is indeed locally ill-posed at $ x = \uk[1] $.

\item
From item \ref{it:quadratic_example_2-illp} and Proposition \ref{pro:pro0}
it now follows directly that the operator equation $ F(x) = y $ fails to be stably solvable at $ y = \uk[1] = F(\pm\uk[1]) $.

%
%
%
%
%
%

\item
\label{it:quadratic_example_2-stable}
We next show that equation \refeq{nonlinopeq}
is stably solvable at $ y = \uk[2] $.
For this purpose consider
any sequence $ y_n \in F(X), \, n = 1,2,\ldots, $ with
$ y_n \to y = \uk[2] $  as $ n \to \infty $, and w.l.o.g we may assume that
$ y_n \neq 0 $ for each $ n $.
According to
items \ref{it:quadratic_example_2-01} -- \ref{it:quadratic_example_2-injective},
there exist representations
\begin{align}
y_n  & = v_n + w_n, \ \textup{where  } v_n \in M, \, w_n \in N,
 \nonumber \\
& \quad v_n = F(\vnk), \quad w_n = B(\vnk,\wnk),
\label{eq:quadratic_example_2-stable-1}
\\
& \quad
\vnk[1/2] = a_n \uk[1] \pm b_n \uk[2] \in M,
\quad \vnk[3/4] = -a_n \uk[1] \pm b_n \uk[2] \in M,
\quad
\wnk[1], \ldots, \wnk[4] \in N,
\nonumber
\end{align}
%
where $ a_n, b_n \ge 0 $ are real numbers satisfying $ a_n + b_n > 0 $.
Notice that in \refeq{quadratic_example_2-stable-1}, no other preimages with respect to $ F $ exist, i.e., $ F^{-1}(v_n) = \{ \vnk[1],\ldots,\vnk[4]\} $ holds,
and the points $ \wnk[1], \ldots, \wnk[4] \in N $
are uniquely determined, respectively.
We thus have
%
\begin{align*}
F^{-1}(y_n) = \{ \vnk + \wnk,\; k = 1,\ldots, 4 \}.
\end{align*}
By assumption we have
\begin{align*}
y_n = a_n^2 \uk[1] + b_n^2 \uk[2] + w_n \to
y = \uk[2]  \textup{ as }  n \to \infty,
\end{align*}
and thus
%
\begin{align}
a_n \to 0, \quad b_n \to 1, \quad
w_n \to 0
\quad \textup{ as }  n \to \infty.
\label{eq:quadratic_example_2-stable-4}
\end{align}
From the stability considerations in item \ref{it:fredholm}
it follows that
\begin{align}
\Vert \pm a_n S w + b_n w \Vert \ge c \Vert w \Vert, \quad n \ge n_0,
\ w \in N,
\label{eq:quadratic_example_2-stable-5}
 \end{align}
for some integer $ n_0 $ large enough, and with some constant $ c > 0 $ which may be chosen independently of $ n $.
The second identity in \refeq{quadratic_example_2-stable-1},
the convergence statement \refeq{quadratic_example_2-stable-4}, and
the inverse stability property \refeq{quadratic_example_2-stable-5}
then imply
$ \wnk \to 0 $ as $ n \to \infty $ for each $ 1 \le k \le 4 $.

With the notation $ x^{(k)} = (-1)^{k-1} \uk[2] $ for $ k = 1, \ldots, 4 $
we thus finally obtain
\begin{align*}
\qdist(F^{-1}(y_n), F^{-1}(y)) &\le
\max_{k=1,\ldots, 4} \Vert v^{(k)}_n + w^{(k)}_n - x^{(k)} \Vert
\\
& \le a_n + \vert 1 - b_n \vert + \max_{k=1,\ldots, 4} \Vert w^{(k)}_n \Vert
\to 0 \quad (n \to \infty).
\end{align*}
This completes the proof of stable solvability at $ y = \uk[2] $.

\item
\label{it:quadratic_example_2-wellp}
Local well-posedness of the equation $ F(x) = y $
at $ x = \uk[2] $ now follows from
part \ref{it:quadratic_example_2-stable} of this proof and Proposition
\ref{pro:pro0}.
This completes the proof of the proposition.
\end{myenumerate}
\end{proof}

\section*{Acknowledgments}
We wish to thank R.~I.~Bo\c{t} (University of Vienna) and J.~Flemming
(TU Chemnitz) for valuable hints and discussions
in the context of ill-posedness concepts.
The first author gratefully acknowledges support by the German Research Foundation (DFG) under grant HO~1454/10-1.

\bibliographystyle{plain}
\bibliography{HP}
\end{document}